\newcommand{\bcen}{\begin{center}}
\newcommand{\ecen}{\end{center}}
\newtheorem{theorem}{Theorem}[section]
\newtheorem{corollary}[theorem]{Corollary}
\newtheorem{remark}[theorem]{Remark}
\begin{document}
\setcounter{page}{1}
\title{A generalized sphere theorem and its applications}
\author{Jing Mao\\
\small{In memory of my father Mr. Xu-Gui Mao}}

\date{}
\protect \footnotetext{\!\!\!\!\!\!\!\!\!\!\!\!{~MSC 2020:
35P15, 58C40.}\\
{Key Words: Radial Ricci curvature, spherically symmetric manifolds,
eigenvalues, eigenvalue comparison theorems, sphere theorems.} }
\maketitle ~~~\\[-15mm]

\begin{center}
{\footnotesize Faculty of Mathematics and Statistics,\\
 Key Laboratory of Applied
Mathematics of Hubei Province, \\
Hubei University, Wuhan 430062, China\\
Key Laboratory of Intelligent Sensing System and Security (Hubei
University), Ministry of Education\\
Email: jiner120@163.com
 }
\end{center}

%\\[5mm]

\begin{abstract}
In this paper, we successfully set up \emph{a generalized sphere
theorem} for compact Riemannian manifolds with radial Ricci
curvature bounded.
 \end{abstract}

%\markright{\sl\hfill J. Mao \hfill}

\section{Introduction and our main results}
\renewcommand{\thesection}{\arabic{section}}
\renewcommand{\theequation}{\thesection.\arabic{equation}}
\setcounter{equation}{0}

Except Euclidean spaces, spheres might be the geometric space on
which people have been focusing too much attention. A natural
question is:
\begin{itemize}
\item \emph{Under what kind of geometric assumptions, a compact manifold is diffeomorphic (or homeomorphic, isometric) to a sphere of the same dimension?}
\end{itemize}
In some literatures, conclusions to the above question were
iconically called ``\emph{sphere theorems}". There are so many
interesting conclusions have been obtained for the above question.
For instance, the classical Toponogov sphere theorem \cite{TVA} and
one of its generalizations given by S. Y. Cheng \cite{CSY} (see also
the end of this section for their detailed statements);
Grove-Shiohama's generalized sphere theorem \cite{GS} which states
that \emph{any connected, complete Riemannian manifold whose
sectional curvature $\mathrm{Sec}(\cdot)$ and diameter $d$ satisfy
$\mathrm{Sec}(\cdot)\geq\delta$ and $d>\pi/2\sqrt{\delta}$ for some
$\delta>0$ is homeomorphic to the sphere}; C. Y. Xia's homeomorphism
sphere theorem \cite{XCY}; Brendle-Schoen's (1/4-pinched curvature)
differentiable sphere theorem \cite{BS}. For more information,
readers can check \cite{BS, CSY, GS, XCY} and references therein.

The purpose of this paper is (from the viewpoint of Spectral
Geometry) trying to get a rigidity result for compact manifolds
imposed suitable assumptions such that they are isometric to a class
of spherically symmetric manifolds. Moreover, if the curvature
assumption was strengthened (to a certain extent), this rigidity
result would imply several sphere theorems directly. Actually, this
is the reason why we call the rigidity result (i.e. Theorem
\ref{theo-1}) in this paper \emph{a generalized sphere theorem}.

Given an $n$-dimensional ($n\geq2$) compact Riemannian manifold
$M^{n}$, in order to state our main conclusions clearly, we wish to
make the following three assumptions:\footnote{~In Remark
\ref{remark-1} below, we would explain that
\textbf{\emph{Assumptions 1-3}} are reasonable and feasible.}

\begin{itemize}

\item (\textbf{\emph{Assumption 1}}) For a point $p\in M^{n}$, there
exists a point $q\in M^{n}$ such that the diameter
$\mathrm{diam}_{M^{n}}$ of the compact manifold $M^{n}$ satisfies
$\mathrm{diam}_{M^{n}}=d(p,q)=l$, where $d(p,q)$ stands for the
Riemannian distance between $p$ and $q$ (that is to say, there
exists a minimizing geodesic joining $p$ and $q$).

\item (\textbf{\emph{Assumption 2}}) $M^{n}$ has a radial Ricci curvature lower bound $(n-1)k(t)$ w.r.t.
the point $p\in M^{n}$, where $t:=d(p,\cdot)$ denotes the Riemannian
distance starting from $p$, and $k(t)$ is a continuous function
defined on $(0,l)$ having the symmetric property
\begin{eqnarray*}
k(t)=k(l-t),\quad \forall t\in(0,l/2),
\end{eqnarray*}
and moreover letting the following system
\begin{eqnarray} \label{warpf}
\left\{
\begin{array}{lll}
f''(t)+k(t)f(t)=0 \qquad &\mathrm{in}~(0,l),  \\[0.5mm]
f(0)=0,~f'(0)=1,~f(l)=0, \\[0.5mm]
 f|_{(0,l)}>0
\end{array} \right.
\end{eqnarray}
be solvable.

\item (\textbf{\emph{Assumption 3}}) The first nonzero closed
eigenvalue $\Lambda_{1}(M^{n})$ of the Laplacian on $M^{n}$
satisfies $\Lambda_{1}(M^{n})\geq \Lambda^{+}$, where
$\Lambda^{+}>0$ is the positive constant $\Lambda=\Lambda^{+}$
corresponding to the solution $\varphi=\varphi(t)$ of the system
\begin{eqnarray} \label{firste}
\left\{
\begin{array}{ll}
\frac{d^{2}\varphi(t)}{dt^{2}}+(n-1)\frac{f'(t)}{f(t)}\frac{d\varphi(t)}{dt}+\Lambda\cdot\varphi(t)=0 \qquad &\mathrm{in}~\left(0,\frac{l}{2}\right),  \\[1mm]
\varphi'(0)=0,~\varphi\left(\frac{l}{2}\right)=0 \\[0.5mm]
 \varphi|_{(0,l/2)}>0,
\end{array} \right.
\end{eqnarray}
with $f(t)$ the solution to the system (\ref{warpf}).

\end{itemize}

In fact, we can prove:

\begin{theorem}  \label{theo-1}
Suppose that $M^{n}$ is an $n$-dimensional compact Riemannian
manifold satisfying the diameter \textbf{Assumption 1}, the
curvature \textbf{Assumption 2}, and the eigenvalue lower bound
\textbf{Assumption 3}. Then $M^{n}$ is isometric to a spherically
symmetric manifold $M^{\ast}:=[0,l)\times_{f}\mathbb{S}^{n-1}$
endowed with a one-point compactification topology at the point
$\{l\}\times_{0}\mathbb{S}^{n-1}$, where the warping function $f$ is
determined by the system (\ref{warpf}).

\end{theorem}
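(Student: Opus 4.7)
Stage 1 (Two-ball setup). The plan is to deduce the isometry via a two-ball eigenvalue decomposition together with the rigidity half of a generalized Cheng eigenvalue comparison theorem. By \textbf{Assumption 1} fix $p, q \in M^{n}$ with $d(p,q) = l$. The open balls $B_{p} := B(p, l/2)$ and $B_{q} := B(q, l/2)$ are disjoint by the triangle inequality. The symmetry $k(t) = k(l-t)$ combined with (\ref{warpf}) forces $f(t) = f(l-t)$, so $f'(l) = -1$ and the one-point compactified $M^{\ast}$ is smooth at both poles; its geodesic hemisphere $B^{\ast} := [0, l/2) \times_{f} \mathbb{S}^{n-1}$ then has first Dirichlet eigenvalue exactly $\Lambda^{+}$, realized by the radial eigenfunction $\varphi$ of (\ref{firste}).

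Stage 2 (Forcing equality of Dirichlet eigenvalues). A generalized Cheng eigenvalue comparison theorem, valid under radial Ricci bounds via Jacobi-field comparison along radial geodesics, yields $\lambda_{1}^{D}(B_{p}) \leq \Lambda^{+}$; the same comparison with basepoint $q$---legitimate because the symmetry of $k$ renders the model isometric to its own reflection $t \mapsto l-t$---gives $\lambda_{1}^{D}(B_{q}) \leq \Lambda^{+}$. Let $u_{p}, u_{q} > 0$ be the first Dirichlet eigenfunctions, extended by zero to $M^{n}$. Choose scalars $a, b$, not both zero, with $a \int_{M} u_{p} + b \int_{M} u_{q} = 0$; then $w := a u_{p} + b u_{q}$ is an admissible test function for $\Lambda_{1}(M^{n})$, and by disjointness of the balls,
\[
\frac{\int_{M} |\nabla w|^{2}}{\int_{M} w^{2}}
= \frac{a^{2} \lambda_{1}^{D}(B_{p}) \|u_{p}\|_{2}^{2} + b^{2} \lambda_{1}^{D}(B_{q}) \|u_{q}\|_{2}^{2}}
       {a^{2} \|u_{p}\|_{2}^{2} + b^{2} \|u_{q}\|_{2}^{2}}
\leq \max\bigl(\lambda_{1}^{D}(B_{p}), \lambda_{1}^{D}(B_{q})\bigr).
\]
Combining with \textbf{Assumption 3} gives $\Lambda^{+} \leq \Lambda_{1}(M^{n}) \leq \max(\lambda_{1}^{D}(B_{p}), \lambda_{1}^{D}(B_{q})) \leq \Lambda^{+}$, so both Dirichlet eigenvalues equal $\Lambda^{+}$ (a convex combination attains its maximum only when its two values coincide).

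Stage 3 (Rigidity and gluing). The equality case of the generalized Cheng comparison forces each of $B_{p}, B_{q}$ to be isometric to the model hemisphere $B^{\ast}$ as a warped product $dt^{2} + f(t)^{2} g_{\mathbb{S}^{n-1}}$ with $t$ the distance from the respective pole. Equality in the associated Bishop--Gromov volume comparison shows that $\overline{B_{p}} \cup \overline{B_{q}} = M^{n}$, so there is no ``middle layer''. The two local isometries agree on the common equatorial sphere $\partial B_{p} = \partial B_{q}$, which is a round $(n-1)$-sphere of radius $f(l/2)$; the symmetry $f(t) = f(l-t)$ guarantees that the glued warped product is smooth across it, yielding the global isometry $M^{n} \cong M^{\ast}$. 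The main obstacle I anticipate is the rigidity clause of the generalized comparison under only radial (rather than full) Ricci bounds: showing that saturation of the Dirichlet eigenvalue forces the warped-product structure of the ball requires Bochner/Reilly-type integral identities along radial geodesics, after which the two-ball accounting and the symmetric gluing are essentially formal.
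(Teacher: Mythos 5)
Your proposal follows essentially the same high-level strategy as the paper: split $M^{n}$ into two disjoint balls $B_{p}$, $B_{q}$ around the diametral pair, squeeze their first Dirichlet eigenvalues between \textbf{Assumption 3} and the Cheng-type comparison (Theorem \ref{theo-CEC}) to force $\lambda_{1}^{D}(B_{p})=\lambda_{1}^{D}(B_{q})=\Lambda^{+}$, invoke the rigidity clause of Theorem \ref{theo-CEC} to identify each ball with the model hemisphere $\mathscr{B}_{M^{\ast}}(p^{\ast},l/2)$, rule out a ``middle layer,'' and glue. Your Stage 2 repackages the paper's Theorem \ref{theo-2} — you use the genuine first Dirichlet eigenfunctions $u_{p},u_{q}$ of the balls, whereas the paper transplants the model eigenfunction $\phi(d(p,\cdot))$, $\phi(d(q,\cdot))$ before taking the same orthogonal linear combination; both variants give the needed inequality, and your convexity argument for forcing both eigenvalues (not merely their maximum) to equal $\Lambda^{+}$ is correct because $a,b\neq 0$ by positivity of $\int u_{p}$ and $\int u_{q}$. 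The one genuine divergence is the ``no middle layer'' step in Stage 3: you argue via a Bishop--Gromov-type volume comparison, $\mathrm{vol}(M^{n})\geq\mathrm{vol}(B_{p})+\mathrm{vol}(B_{q})=\mathrm{vol}(M^{\ast})\geq\mathrm{vol}(M^{n})$, to conclude the open complement of $\overline{B_{p}}\cup\overline{B_{q}}$ has measure zero and hence is empty; the paper instead argues by contradiction with \textbf{Assumption 3} using strict domain monotonicity of Dirichlet eigenvalues applied to $M^{n}\setminus B_{p}\supsetneq B_{q}$. Both routes are legitimate, but the paper's stays inside the spectral toolbox it has already set up, while yours imports a volume comparison under merely \emph{radial} Ricci bounds, which the paper never states and which you would need to cite or prove (it does exist in the Freitas--Mao--Salavessa framework, via the same Heintze--Karcher-type Jacobian estimate that underlies Theorem \ref{theo-CEC}, but this should be made explicit). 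Two minor further remarks: applying the comparison at the second pole $q$ requires the radial Ricci bound with respect to $q$, which you justify by waving at the symmetry of $k$ — this is a shared looseness with the paper, since \textbf{Assumption 2} as written is only a bound at $p$; and your claim that the two local isometries ``agree'' on the common equatorial sphere is unnecessary and not automatic — what matters is that any boundary-gluing isometry of the round $(n-1)$-sphere of radius $f(l/2)$ extends to a global isometry of $M^{\ast}$, so the glued space is still $M^{\ast}$ up to isometry.
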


\begin{remark} \label{remark-1}
\rm{ (1) For a given complete Riemannian $n$-manifold $M^{n}$,
$n\geq2$, the notion that a domain $\Omega\subset M^{n}$ is said to
be spherically symmetric can be well-defined (see e.g.
\cite[Defnition 2.1]{FMI}). For readers' convenience, we wish to
repeat it here as follows:
\begin{itemize}
\item \emph{A domain $\Omega=\exp_p([0,l)\times{S}_p^{n-1})
\subset M\backslash \mathrm{Cut}(p)$, with $l<inj(p)$,  is said to
be spherically symmetric with respect to a point $p\in \Omega$, if
 and only if
the matrix $\mathbb{A}(t,\xi)$ satisfies $\mathbb{A}(t,\xi)=f(t)I$,
for a function $f\in{C^{2}([0,l))}$,  with   $f(0)=0$, $f'(0)=1$,
and  $f|(0,l)>0$.}
\end{itemize}
 Here $\exp_p$ denotes the exponential mapping at the point $p\in
 M^{n}$, $\mathrm{Cut}(p)$ and $inj(p)$ stand for the cut-locus of $p$ and the injectivity radius at
 $p$ respectively, ${S}_p^{n-1}$ is the unit sphere in the tangent
 space $T_{p}M^{n}$ at $p$, $I$ denotes the identity matrix, and $\mathbb{A}(t,\xi)$ is the path of linear
 transformations defined as in \cite[p. 703]{FMI}.
  We strongly
 suggest readers to check \cite[Section 2]{FMI} carefully not only
 for the above definition of being spherically symmetric but also
 for strict definitions that manifolds have radial (Ricci or
 sectional) curvature (lower  or upper) bound with respect to a
 given point (see \cite[Definitions 2.2 and 2.3]{FMI}), a spectral
 asymptotical property of spherically symmetric manifolds (see \cite[Lemma
 2.5]{FMI}), and some other fundamental properties. We wish to
 mention that readers can also get these facts in the author's other works \cite{JM3,
 MDW}.

 BTW, sometimes, spherically symmetric manifolds are also called \emph{generalized space
 forms} (as named by Katz and Kondo \cite{KK}) and a standard model for such manifolds is given by
the quotient manifold of the warped product
$[0,l)\times_{f}\mathbb{S}^{n-1}$ with the metric
\begin{eqnarray} \label{metric}
 ds^{2}=dt^{2}+f(t)^{2}|d\xi|^{2}, \quad \forall
\xi \in S_p^{n-1},~ 0<t<l,
 \end{eqnarray}
where $\mathbb{S}^{n-1}$ denotes the unit Euclidean $(n-1)$-sphere.
Besides, as shown in \cite[p. 706]{FMI}, the radial sectional
curvature and the radial component of the Ricci tensor of a model
space $[0,l)\times_{f} \mathbb{S}^{n-1}$, with $f$ of class $C^2$,
 are  respectively given
by
\begin{equation*}
\begin{array}{ll}
\mathcal{K}(\frac{d}{dt}, V)=R(\frac{d}{d t},V,\frac{d}{d t}, V)
=-\frac{f''(t)}{f(t)}&\mbox{~for}~~ V\in T_{\xi}\mathbb{S}^{n-1},
~|V|=1,\\
\mathrm{Ric}(\frac{d}{d t},\frac{d}{d t})=-(n-1)\frac{f''(t)}{f(t)},
\end{array}
\end{equation*}
with $R(\cdot,\cdot,\cdot,\cdot)$ the curvature tensor. This fact
(together with Theorem \ref{theo-CEC} below) is exactly the reason
why the warping function of the spherically symmetric manifold
$M^{\ast}$ constructed in Theorem \ref{theo-1} should be determined
by the system (\ref{warpf}). Especially, a space form with constant
sectional curvature $K$ is certainly a spherically symmetric
manifold and in this particular situation one has
\begin{eqnarray*}
f(t)=\left\{
\begin{array}{llll}
\frac{\sin\sqrt{K}t}{\sqrt{K}}, & \quad  l= \frac{\pi}{\sqrt{K}}
  & \quad K>0,\\
 t, &\quad l=+\infty & \quad K=0, \\
\frac{\sinh\sqrt{-K}t}{\sqrt{-K}}, & \quad l=+\infty  &\quad K<0.
\end{array}
\right.
\end{eqnarray*}
  For the spherically symmetric manifold $M^{\ast}$
 constructed in Theorem \ref{theo-1}, since in this situation $l$ is
 finite and $f(l)=0$ (i.e. $M^{\ast}$ ``\emph{closes}"), one needs
 to define a one-point compactification topology at the closing point
$\{l\}\times_{0}\mathbb{S}^{n-1}$ such that at this closing point
the metric (\ref{metric}) can be extended continuously, and then the
space $M^{\ast}$ would be a Riemannian metric space -- for more
details about the one-point compactification topology, see \cite[p.
706]{FMI}.
\\
(2) We wish to say that there exist many continuous functions $k(t)$
of different types such that the system (\ref{warpf}) is solvable.
For instance,
 \begin{itemize}

\item
 if $k(t)\equiv K>0$ is a constant function,
 $l=\pi/\sqrt{K}$,
then one has
\begin{eqnarray*}
f(t)=\frac{\sin(\sqrt{K}t)}{\sqrt{K}},\qquad \Lambda^{+}=nK,
\end{eqnarray*}
which definitely satisfy \textbf{\emph{Assumptions 1-3}}. By Theorem
\ref{theo-1}, in this setting, the compact manifold $M^{n}$ is
isometric to $\mathbb{S}^{n}(\frac{1}{\sqrt{K}})$, i.e. a Euclidean
$n$-sphere of radius $\frac{1}{\sqrt{K}}$ (or of constant sectional
curvature $K$).

\item if $k(t)=\frac{12}{45-(t-3)^{2}}$, $t\in[0,6)$, $l=6$, then one can
get from (\ref{warpf}) that
\begin{eqnarray} \label{ex-2}
f(t)=\frac{15}{8}-\frac{1}{4}(t-3)^{2}+\frac{1}{216}(t-3)^{4},
\end{eqnarray}
and in this situation, by  Theorem \ref{theo-1}, one knows that
$M^{n}$ is isometric to the spherically symmetric manifold
$[0,6]\times_{f}\mathbb{S}^{n-1}$ with the warping function $f(t)$
given by (\ref{ex-2}).

\item if $k(t)=\frac{12}{80-(t-4)^{2}}$, $t\in[0,8)$, $l=8$, then one can
get from (\ref{warpf}) that
\begin{eqnarray*}
f(t)=\frac{1}{512}\left[(t-4)^{4}-96(t-4)^{2}+1280\right].
\end{eqnarray*}
\end{itemize}
Clearly, different choices of
the continuous function $k(t)$, the diameter $l$ would create different warping functions $f(t)$, and consequently would give
different isometric manifolds $M^{\ast}$. \\
 (3) For a spherically symmetric manifold
 $[0,l)\times_{f}\mathbb{S}^{n-1}$, generally the point
 $=\{0\}\times_{f(0)}\mathbb{S}^{n-1}$ is called the base point of this
 manifold. Denote this base point by $p^{\ast}$. We wish to point out one thing that the system (\ref{firste}) is
solvable, and moreover, $\Lambda^{+}$ should be the first Dirichlet
eigenvalue
$\lambda_{1}\left(\mathscr{B}_{M^{\ast}}(p^{\ast},\frac{l}{2})\right)$
of the Laplacian on the geodesic ball
$\mathscr{B}_{M^{\ast}}(p^{\ast},\frac{l}{2})$, with center
$p^{\ast}$ and radius $l/2$, of the spherically symmetric manifold
$M^{\ast}=[0,l)\times_{f}\mathbb{S}^{n-1}$
 constructed in Theorem \ref{theo-1}. In this setting, the solution
 $\varphi(t)$ to the system (\ref{firste}) is the eigenfunction
 belonging to the eigenvalue $\Lambda^{+}$. In fact, by Courant's
 nodal domain theorem for the Dirichlet eigenvalue proem of the
 Laplacian (see e.g. \cite[Chapter I]{IC}), one knows that
 eigenfunctions of
 $\lambda_{1}\left(\mathscr{B}_{M^{\ast}}(p^{\ast},\frac{l}{2})\right)$
 would not change sign on
 $\mathscr{B}_{M^{\ast}}(p^{\ast},\frac{l}{2})$, i.e. the number of the nodal
 domain of its eigenfunctions is $1$ and the multiplicity of $\lambda_{1}\left(\mathscr{B}_{M^{\ast}}(p^{\ast},\frac{l}{2})\right)$
  is also $1$. Besides, for the geodesic
 ball  $\mathscr{B}_{M^{\ast}}(p^{\ast},\frac{l}{2})$ with the
 metric (\ref{metric}), it is easy to see that the Laplacian on
 $\mathscr{B}_{M^{\ast}}(p^{\ast},\frac{l}{2})$ can be rewritten as
  \begin{eqnarray*}
\Delta=
\frac{d^{2}}{dt^{2}}+(n-1)\frac{f'(t)}{f(t)}\frac{d}{dt}+\Delta_{\mathbb{S}^{n-1}},
  \end{eqnarray*}
where $\Delta_{\mathbb{S}^{n-1}}$ denotes the Laplacian on
$\mathbb{S}^{n-1}$ with respect to the round metric. Based on these
facts, it is not hard to get that eigenfunctions of
$\lambda_{1}\left(\mathscr{B}_{M^{\ast}}(p^{\ast},\frac{l}{2})\right)$
should be radial, and satisfy the system (\ref{firste}). Moreover,
$\Lambda=\Lambda^{+}=\lambda_{1}\left(\mathscr{B}_{M^{\ast}}(p^{\ast},\frac{l}{2})\right)>0$.
The precondition $\varphi'(0)=0$ in the system (\ref{firste}) is
imposed to ensure the smoothness of the eigenfunction $\varphi(t)$.
Readers can also check \cite[Lemmas 3.1 and 3.2]{FMI} for an
explanation of facts mentioned here.
 }
\end{remark}

By Theorem \ref{theo-1} and Remark \ref{remark-1} (2), one easily
gets the following sphere theorem.

\begin{corollary} \label{coro-1}
Suppose that $M^{n}$ is an $n$-dimensional compact Riemannian
manifold whose radial Ricci curvature is bounded from below by some
constant $(n-1)K>0$ w.r.t. $p\in M^{n}$,
$\Lambda_{1}(M^{n})\geq\Lambda^{+}=nK$, and
$\mathrm{diam}_{M^{n}}=d(p,q)=\frac{\pi}{\sqrt{K}}$ for some point
$q\in M^{n}$. Then $M^{n}$ is isometric to
$\mathbb{S}^{n}(\frac{1}{\sqrt{K}})$.
\end{corollary}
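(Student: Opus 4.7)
The plan is to reduce the corollary to a direct application of Theorem \ref{theo-1} by verifying, under the present hypotheses, that all three assumptions are satisfied with the constant choice $k(t)\equiv K$ and $l=\pi/\sqrt{K}$. Assumption 1 is immediate from the hypothesis $\mathrm{diam}_{M^{n}}=d(p,q)=\pi/\sqrt{K}$. For Assumption 2, I would note that $k(t)\equiv K$ is continuous on $(0,l)$ and trivially symmetric about $l/2$, while the ODE $f''+Kf=0$ with the initial data $f(0)=0$, $f'(0)=1$ is solved by $f(t)=\sin(\sqrt{K}t)/\sqrt{K}$, which is strictly positive on $(0,\pi/\sqrt{K})$ and vanishes at $t=l$; hence the system (\ref{warpf}) is solvable.

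Next, I would verify Assumption 3 by a short explicit computation. With the $f$ just found, one has $f'(t)/f(t)=\sqrt{K}\cot(\sqrt{K}t)$, and the radial trial function $\varphi(t)=\cos(\sqrt{K}t)$ satisfies $\varphi'(0)=0$, $\varphi(l/2)=0$, and $\varphi>0$ on $(0,l/2)$. Substituting into (\ref{firste}) yields
\begin{equation*}
-K\cos(\sqrt{K}t)-(n-1)K\cos(\sqrt{K}t)+\Lambda\cos(\sqrt{K}t)=0,
\end{equation*}
which forces $\Lambda=nK$. By the uniqueness (multiplicity one) of the first Dirichlet eigenvalue discussed in Remark \ref{remark-1}(3), this is the constant $\Lambda^{+}$; in particular $\Lambda^{+}=nK$ is precisely the quantity appearing in the hypothesis $\Lambda_{1}(M^{n})\geq nK$, so Assumption 3 holds as well.

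Having checked all three assumptions, Theorem \ref{theo-1} applies and produces an isometry from $M^{n}$ onto the spherically symmetric manifold $M^{\ast}=[0,\pi/\sqrt{K})\times_{f}\mathbb{S}^{n-1}$ with the one-point compactification at $\{l\}\times_{0}\mathbb{S}^{n-1}$ and the warped metric $ds^{2}=dt^{2}+K^{-1}\sin^{2}(\sqrt{K}t)|d\xi|^{2}$. The final step is to recognise $M^{\ast}$ as $\mathbb{S}^{n}(1/\sqrt{K})$; this is the standard fact recalled in Remark \ref{remark-1}(1) that the Euclidean $n$-sphere of radius $1/\sqrt{K}$ is, in geodesic polar coordinates around any of its points, exactly this warped product. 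Since every step is either a direct substitution or a quotation of results already set up in the paper, I do not expect a genuine obstacle; the only point requiring a bit of care is identifying the compactification point of $M^{\ast}$ with the antipode of $p$ on $\mathbb{S}^{n}(1/\sqrt{K})$, so that the composed map is a bona fide Riemannian isometry of the compactified spaces.
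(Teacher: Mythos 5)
Your proposal is correct and follows the same route as the paper: the paper derives Corollary \ref{coro-1} directly from Theorem \ref{theo-1} together with Remark \ref{remark-1}(2), which records exactly the data $f(t)=\sin(\sqrt{K}t)/\sqrt{K}$ and $\Lambda^{+}=nK$ for $k\equiv K$, $l=\pi/\sqrt{K}$. You have simply written out the verifications (solving the $f$-ODE, substituting $\varphi(t)=\cos(\sqrt{K}t)$ into (\ref{firste}), and identifying the warped product with the round sphere) that the paper leaves implicit.
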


The above corollary can be even weakened as follows:

\begin{corollary} \label{coro-2}
Suppose that $M^{n}$ is an $n$-dimensional compact Riemannian
manifold with radial Ricci curvature bounded from below by some
constant $(n-1)K>0$, $\Lambda_{1}(M^{n})\geq\Lambda^{+}=nK$, and
$\mathrm{diam}_{M^{n}}=\frac{\pi}{\sqrt{K}}$. Then $M^{n}$ is
isometric to $\mathbb{S}^{n}(\frac{1}{\sqrt{K}})$.
\end{corollary}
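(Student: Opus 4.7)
The plan is to reduce Corollary~\ref{coro-2} directly to Corollary~\ref{coro-1}. The only difference between the two statements is that Corollary~\ref{coro-1} distinguishes a base point $p \in M^{n}$ with respect to which both the radial Ricci lower bound and the diameter are measured, whereas Corollary~\ref{coro-2} does not. So my task is to supply such a base point from the weaker hypotheses of Corollary~\ref{coro-2}.

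Compactness of $M^{n}$ makes this immediate: the distance function $d \colon M^{n} \times M^{n} \to \mathbb{R}$ is continuous, hence attains its supremum, so there exist $p,q \in M^{n}$ with $d(p,q) = \mathrm{diam}_{M^{n}} = \pi/\sqrt{K}$. I would read the hypothesis ``radial Ricci curvature bounded from below by $(n-1)K$'' in its natural sense as holding with respect to every point of $M^{n}$ (no particular point is singled out in the statement of Corollary~\ref{coro-2}); in particular, the bound holds with respect to this chosen $p$. Thus \textbf{Assumption 1} is fulfilled with $l = \pi/\sqrt{K}$, and \textbf{Assumption 2} is fulfilled with the constant function $k(t) \equiv K$, for which the symmetry $k(t) = k(l-t)$ is trivial and the ODE (\ref{warpf}) is solved by $f(t) = \sin(\sqrt{K}t)/\sqrt{K}$, whose first positive zero is precisely $\pi/\sqrt{K}$.

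For \textbf{Assumption 3}, plugging this $f$ into (\ref{firste}) yields the equation $\varphi'' + (n-1)\sqrt{K}\cot(\sqrt{K}t)\,\varphi' + \Lambda\,\varphi = 0$ on $(0,\pi/(2\sqrt{K}))$, which is solved by $\varphi(t) = \cos(\sqrt{K}t)$ with eigenvalue $\Lambda^{+} = nK$; this matches the standing hypothesis $\Lambda_{1}(M^{n}) \ge nK$. All three Assumptions 1--3 being verified for the pair $(p,q)$, one invokes Corollary~\ref{coro-1} (or directly Theorem~\ref{theo-1}) to obtain the isometry $M^{n} \cong \mathbb{S}^{n}(1/\sqrt{K})$.

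The most delicate point is the interpretation of the radial Ricci hypothesis, and this is where a subtler proof might be needed. If one insisted on reading ``radial Ricci curvature bounded from below'' as holding only with respect to some unspecified single point $p' \in M^{n}$, then one would have to argue separately that $p'$ itself realizes the diameter, since the diameter-realizing pair produced by compactness need not include $p'$. Such an argument would combine a Bonnet--Myers-type bound $\max_{y} d(p',y) \le \pi/\sqrt{K}$ along radial geodesics from $p'$ with a rigidity step (for instance via Bishop--Gromov volume comparison) to force equality. Under the natural ``every point'' reading adopted above this extra work is unnecessary, so the proof reduces entirely to compactness plus a citation of Corollary~\ref{coro-1}.
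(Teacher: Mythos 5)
Your reduction is correct and, under the reading you adopt, matches what the paper does: the paper dispatches Corollary~\ref{coro-2} with the single sentence that it ``follows directly'' from Theorem~\ref{theo-1}, and your argument fills in precisely the obvious details (take a diameter-realizing pair by compactness, note $k\equiv K$ satisfies the symmetry trivially, check $f(t)=\sin(\sqrt{K}t)/\sqrt{K}$, $\varphi(t)=\cos(\sqrt{K}t)$, $\Lambda^{+}=nK$, invoke Theorem~\ref{theo-1}). Your ODE verification that $\Lambda^{+}=nK$ is accurate.

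The one real subtlety is exactly the one you flag, and it is worth stressing that it is not a quibble: the paper's own Remark following Corollary~\ref{coro-2} explicitly states that the curvature hypothesis of Corollary~\ref{coro-2} is \emph{weaker} than that of Corollary~\ref{coro-1}. If the radial Ricci bound were required at every point (your ``natural'' reading, which is what your proof uses), the hypothesis would be \emph{strictly stronger} than in Corollary~\ref{coro-1} --- indeed, a radial Ricci bound from every base point is equivalent to a full Ricci bound $\mathrm{Ric}\geq(n-1)K$, since at any $x$ every unit vector arises as a radial direction from a nearby point. So the paper evidently intends the radial Ricci bound to hold only from some unspecified point $p'$ not a priori tied to the diameter realization, and under that reading your reduction does not close: Myers' theorem along geodesics from $p'$ gives only $\sup_{x}d(p',x)\leq\pi/\sqrt{K}$, which is consistent with $\sup_{x}d(p',x)<\pi/\sqrt{K}$ even when $\mathrm{diam}_{M^{n}}=\pi/\sqrt{K}$, so $p'$ need not be one of the diameter-realizing points, and neither you nor the paper supplies the rigidity step to force it. Your fallback sketch (Bonnet--Myers plus a Bishop--Gromov rigidity step) would need to be carried out, and it is not obvious it succeeds from a \emph{radial} (rather than full) Ricci bound. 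In short: your proof is correct under the every-point reading, which is what the paper tacitly uses, but you are right to be uneasy, because the paper's own remark asserts the weaker reading, under which the claimed ``follows directly'' is not justified.
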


\begin{remark}
\rm{(1) In Corollary \ref{coro-1}, we only require the existence of
a point $p\in M^{n}$ from which the radial Ricci curvature has a
lower bound $(n-1)K>0$. However, in Corollary \ref{coro-2}, the
positive lower bound assumption for the radial Ricci curvature is
\emph{pointwise}, which of course is weaker than the curvature
assumption
in Corollary \ref{coro-1}. \\
 (2) If the curvature assumption in Corollary \ref{coro-2} was
 weakened to be ``\emph{the Ricci curvature $\mathrm{Ric}(M^{n})$ of $M^n$ has a lower bound
 $(n-1)K>0$}", then the assumption ``$\Lambda_{1}(M^{n})\geq\Lambda^{+}=nK$" in Corollary \ref{coro-2} can be removed. This is because,
 if $\mathrm{Ric}(M^{n})\geq(n-1)K>0$, A. Lichnerowicz \cite{LA}
 obtained the eigenvalue estimate $\Lambda_{1}(M^{n})\geq nK$ by
 applying Bochner's formula directly.
  \\
 (3) If the curvature assumption in Corollary \ref{coro-2} was
 weakened to be ``$\mathrm{Ric}(M^{n})\geq(n-1)K>0$", then our Corollary \ref{coro-2} degenerates into a
generalized
 Toponogov sphere theorem proven by S. Y. Cheng (see \cite[Theorem
 3.1]{CSY}). That is to say, it holds:
\begin{itemize}
\item \emph{Suppose that $M^{n}$ is an $n$-dimensional compact Riemannian
manifold with Ricci curvature bounded from below by some constant
$(n-1)K>0$, and $\mathrm{diam}_{M^{n}}=\frac{\pi}{\sqrt{K}}$. Then
$M^{n}$ is isometric to $\mathbb{S}^{n}(\frac{1}{\sqrt{K}})$.}
\end{itemize}
 If furthermore the curvature assumption was
 weakened to be ``\emph{the sectional curvature $\mathrm{Sec}(M^n)$ of $M^n$ has a lower bound
 $K>0$}", then Corollary \ref{coro-2} degenerates into the classical
 Toponogov sphere theorem, which states:
 \begin{itemize}
 \item \emph{For an $n$-dimensional compact Riemannian
manifold $M^n$, if $\mathrm{Sec}(M^n)\geq K>0$ and
$\mathrm{diam}_{M^{n}}=\frac{\pi}{\sqrt{K}}$, then $M^{n}$ is
isometric to $\mathbb{S}^{n}(\frac{1}{\sqrt{K}})$.}
 \end{itemize}
}
\end{remark}

In fact, if $n=2$, the radial Ricci curvature, the Ricci curvature
of the compact surface $M^{n}=M^{2}$ coincide with its Gaussian
curvature, and in this situation, $\Lambda_{1}(M^2)\geq2K$ follows,
Corollary \ref{coro-2} becomes exactly Cheng's generalized
 Toponogov sphere theorem \cite[Theorem
 3.1]{CSY}. This fact inspires us to consider:

\vspace{3mm}
 \textbf{Question}. \emph{Is it possible to get the eigenvalue estimate $\Lambda_{1}(M^{n})\geq nK$ if $n\geq3$ and the
 radial Ricci curvature of $M^{n}$ is bounded from below by some
constant $(n-1)K>0$? }

\section{Proof of Theorem \ref{theo-1}}
\renewcommand{\thesection}{\arabic{section}}
\renewcommand{\theequation}{\thesection.\arabic{equation}}
\setcounter{equation}{0}

\begin{theorem} \label{theo-CEC} \cite[Theorem 3.6]{FMI} (Cheng-type eigenvalue comparison theorem)
Let $M$ be a complete $n$-dimensional Riemannian manifold with a
radial Ricci curvature
 lower bound $(n-1)k(t)=-(n-1)f''(t)/f(t)$ with respect to the point $p$.
We then have
\begin{eqnarray}  \label{CEC}
\lambda_{1}(B(p,r_{0}))\leq\lambda_{1}\left(\mathscr{B}_{M^{\ast}}(p^{\ast},r_{0})\right),
\end{eqnarray}
where $\lambda_{1}(\cdot)$ denotes the first eigenvalue of the
corresponding geodesic ball. Moreover, the equality in (\ref{CEC})
holds if and only if $B(p,r_{0})$ is isometric to
$\mathscr{B}_{M^{\ast}}(p^{\ast},r_{0})$.
\end{theorem}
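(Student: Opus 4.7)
The plan is to follow the classical Barta--Cheng strategy: pull back the first radial Dirichlet eigenfunction of the model ball $\mathscr{B}_{M^{\ast}}(p^{\ast},r_{0})$ to a radial test function on $B(p,r_{0})$ and estimate its Rayleigh quotient, using a Ricci-based Laplacian comparison for the distance function $t(x)=d(p,x)$. The rigidity will come from tracking the equality cases in the Laplacian comparison.

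Concretely, I would first record that since $\mathscr{B}_{M^{\ast}}(p^{\ast},r_{0})$ is a ball in the rotationally symmetric model space $M^{\ast}=[0,l)\times_{f}\mathbb{S}^{n-1}$, separation of variables plus Courant's nodal domain theorem forces its first Dirichlet eigenfunction to be radial; call it $\varphi(t)$, so that
\begin{equation*}
\varphi''(t)+(n-1)\tfrac{f'(t)}{f(t)}\varphi'(t)+\lambda_{1}\bigl(\mathscr{B}_{M^{\ast}}(p^{\ast},r_{0})\bigr)\,\varphi(t)=0\quad\text{on }(0,r_{0}),
\end{equation*}
with $\varphi'(0)=0$, $\varphi(r_{0})=0$, $\varphi>0$ on $[0,r_{0})$, and (by a standard maximum-principle/ODE argument on the first eigenfunction) $\varphi'(t)\leq 0$ on $[0,r_{0}]$. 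Define the test function $\Phi(x):=\varphi(t(x))$ on $B(p,r_{0})$; it is Lipschitz, vanishes on $\partial B(p,r_{0})$, and is positive inside, so it is admissible in the Rayleigh quotient for $\lambda_{1}(B(p,r_{0}))$.

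The analytic core is the Laplacian comparison $\Delta t\leq(n-1)f'(t)/f(t)$ on $B(p,r_{0})\setminus(\mathrm{Cut}(p)\cup\{p\})$, which follows by integrating the Riccati inequality obtained from Bochner's formula using only the radial Ricci lower bound $\mathrm{Ric}(\partial_{t},\partial_{t})\geq -(n-1)f''(t)/f(t)$ along each minimizing geodesic from $p$, compared with the model ODE satisfied by $f'/f$; this is exactly where \textbf{Assumption 2} enters. Because $\varphi'\leq 0$, multiplying by $\varphi'(t)$ reverses the inequality and yields, pointwise off the cut locus,
\begin{equation*}
\Delta\Phi=\varphi''(t)+\varphi'(t)\,\Delta t\;\geq\;\varphi''(t)+(n-1)\tfrac{f'(t)}{f(t)}\varphi'(t)=-\lambda_{1}\bigl(\mathscr{B}_{M^{\ast}}(p^{\ast},r_{0})\bigr)\Phi.
\end{equation*}
To pass this to the integral $\int \Phi(-\Delta\Phi)$ over $B(p,r_{0})$, I would either use the standard distributional extension of $\Delta t$ across $\mathrm{Cut}(p)$ (which only adds a non-positive singular part, preserving the inequality) or, more elementarily, apply the divergence theorem on the complement of an $\varepsilon$-neighborhood of $\mathrm{Cut}(p)\cup\{p\}$ and let $\varepsilon\to 0$, using $\Phi|_{\partial B}=0$ to kill the boundary term. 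Either way one gets
\begin{equation*}
\int_{B(p,r_{0})}|\nabla\Phi|^{2}\leq \lambda_{1}\bigl(\mathscr{B}_{M^{\ast}}(p^{\ast},r_{0})\bigr)\int_{B(p,r_{0})}\Phi^{2},
\end{equation*}
which by Rayleigh's characterization gives \eqref{CEC}.

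For the equality case, if $\lambda_{1}(B(p,r_{0}))=\lambda_{1}(\mathscr{B}_{M^{\ast}}(p^{\ast},r_{0}))$, then $\Phi$ is itself a first Dirichlet eigenfunction on $B(p,r_{0})$ and every intermediate inequality above is saturated. In particular, $\varphi'(t)\Delta t=\varphi'(t)(n-1)f'(t)/f(t)$ wherever both sides make sense; since $\varphi'<0$ on $(0,r_{0})$, this forces $\Delta t\equiv (n-1)f'(t)/f(t)$ on $B(p,r_{0})\setminus\mathrm{Cut}(p)$, which also rules out any singular part of $\Delta t$ and hence shows $\mathrm{Cut}(p)\cap B(p,r_{0})=\emptyset$. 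Equality in the Riccati/Laplacian comparison is rigid: along every unit-speed radial geodesic $\gamma$ from $p$, all radial Jacobi fields are proportional to $f(t)$, so the shape operator of each geodesic sphere $\partial B(p,t)$ equals $(f'(t)/f(t))\,\mathrm{Id}$, and the exponential map $\exp_{p}$ pulls back the metric to $dt^{2}+f(t)^{2}g_{\mathbb{S}^{n-1}}$. This builds the desired isometry $B(p,r_{0})\cong\mathscr{B}_{M^{\ast}}(p^{\ast},r_{0})$.

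The main obstacle is the rigorous treatment at the cut locus: the pointwise Laplacian inequality only holds on a smooth open set, and one has to either invoke the distributional comparison (with the right sign for the singular part) or do a careful $\varepsilon$-exhaustion argument, and then in the equality step argue that the singular contribution must vanish so that the cut locus is actually empty inside $B(p,r_{0})$. Once this is handled, the rigidity step is a standard consequence of uniqueness for the Riccati equation governing the shape operator along radial geodesics.
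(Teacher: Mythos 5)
Your proposal is sound, but note that the paper does not prove this statement at all: it is quoted verbatim from \cite[Theorem 3.6]{FMI}, and the only place the underlying argument is invoked (in the proof of Theorem \ref{theo-2}) refers to the proof in \cite[pp.~712--713]{FMI}. That proof follows the same transplantation strategy as yours (pull back the radial first eigenfunction $\varphi$ of the model ball via $t=d(p,\cdot)$ and compare Rayleigh quotients), but it is implemented in geodesic polar coordinates: the radial Ricci lower bound is converted into the Bishop-type volume-element comparison $\partial_{t}\log\sqrt{\det\mathbb{A}(t,\xi)}\leq(n-1)f'(t)/f(t)$, the Rayleigh quotient is written as an integral in $t$ up to $\min(c(\xi),r_{0})$ for each direction $\xi$, and a one-dimensional integration by parts using the ODE $(f^{n-1}\varphi')'=-\lambda f^{n-1}\varphi$, the sign $\varphi'\leq0$, and the monotonicity of $\det\mathbb{A}/f^{n-1}$ finishes the estimate; the cut locus then only enters through a boundary term $\varphi\varphi'\sqrt{\det\mathbb{A}}\leq0$, so no distributional Laplacian is needed, and equality forces both $\det\mathbb{A}=f^{n-1}$ and $c(\xi)\geq r_{0}$, giving the isometry. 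Your Barta-style route via the pointwise comparison $\Delta t\leq(n-1)f'/f$ plus a distributional (or $\varepsilon$-exhaustion) treatment of $\mathrm{Cut}(p)$ is a legitimate alternative and yields the same integral inequality; its cost is exactly where you flagged it: in the equality case the step ``no singular part of $\Delta t$ implies $\mathrm{Cut}(p)\cap B(p,r_{0})=\emptyset$'' needs an extra standard ingredient (density in the cut locus of points with two minimizing geodesics, plus the blow-up of $\Delta t$ at conjugate cut points), whereas the polar-coordinate argument of \cite{FMI} gets $c(\xi)\geq r_{0}$ for free from the vanishing of the boundary term. The polar-coordinate version also has the practical advantage, exploited in Theorem \ref{theo-2} of this paper, of directly producing the inequality $\int_{B(p,r_{0})}(\phi')^{2}\leq\lambda_{1}(\mathscr{B}_{M^{\ast}}(p^{\ast},r_{0}))\int_{B(p,r_{0})}\phi^{2}$ for the transplanted function on each ball, which is what gets reused later.
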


\begin{remark}
\rm{In \cite[Section 6]{JM4}, the author has obtained a heat kernel
comparison theorem for complete Riemannian manifolds with radial
(Ricci or sectional) curvature bounded, and then has used this heat
kernel comparison theorem to successfully give a second proof to the
above Cheng-type eigenvalue comparison theorem.
 }
\end{remark}

Using Theorem \ref{theo-CEC} directly, we can get the following
eigenvalue comparison result.

\begin{theorem} \label{theo-2}
Suppose that $M^{n}$ is an $n$-dimensional compact Riemannian
manifold satisfying the diameter \textbf{Assumption 1} and the
curvature \textbf{Assumption 2}. Then
 \begin{eqnarray*}
\Lambda_{1}(M^{n})\leq\lambda_{1}\left(\mathscr{B}_{M^{\ast}}\left(p^{\ast},\frac{l}{2}\right)\right),
 \end{eqnarray*}
where $\mathscr{B}_{M^{\ast}}\left(p^{\ast},\frac{l}{2}\right)$
denotes the geodesic ball, centered at the base point $p^{\ast}$ and
of radius $l/2$, on the spherically symmetric manifold
$M^{\ast}=[0,l)\times_{f}\mathbb{S}^{n-1}$.
\end{theorem}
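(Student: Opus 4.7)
The plan is to apply Theorem \ref{theo-CEC} on two antipodal geodesic balls of radius $l/2$ and then feed the resulting Dirichlet eigenfunctions into a standard two-test-function Rayleigh quotient estimate for $\Lambda_1(M^n)$. First I would use \textbf{Assumption 1} to fix $q\in M^n$ with $d(p,q)=l$ and consider the open geodesic balls $B_p:=B(p,l/2)$ and $B_q:=B(q,l/2)$. The triangle inequality shows that their interiors are disjoint, since any common point $x$ would force $l=d(p,q)\leq d(p,x)+d(x,q)<l$; both balls are therefore proper subsets of $M^n$.

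Next I would apply Theorem \ref{theo-CEC} with $r_0=l/2$ to each ball. \textbf{Assumption 2} supplies the radial Ricci lower bound $(n-1)k(t)=-(n-1)f''(t)/f(t)$ from $p$, so directly
\[
\lambda_1(B_p)\leq\lambda_1\!\left(\mathscr{B}_{M^\ast}(p^\ast,l/2)\right).
\]
For $B_q$ I would use the symmetry $k(t)=k(l-t)$, which forces the corresponding symmetry $f(t)=f(l-t)$ on the warping function via (\ref{warpf}) and renders $M^\ast$ symmetric about its midpoint $\{t=l/2\}$. Reading \textbf{Assumption 2} symmetrically at the diametral partner $q$ (so that the radial Ricci bound from $q$ also reads $(n-1)k(d(q,\cdot))$ by virtue of $k(t)=k(l-t)$), a second application of Theorem \ref{theo-CEC} at $q$ yields $\lambda_1(B_q)\leq\lambda_1(\mathscr{B}_{M^\ast}(q^\ast,l/2))=\lambda_1(\mathscr{B}_{M^\ast}(p^\ast,l/2))$, where $q^\ast$ denotes the closing point of $M^\ast$.

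Finally I would run the two-test-function Rayleigh argument. Let $u_p\geq 0$ and $u_q\geq 0$ be the first Dirichlet eigenfunctions of $B_p$ and $B_q$, extended by zero to $M^n$; since their supports overlap only on the measure-zero set where $\partial B_p$ meets $\partial B_q$, the products $u_p u_q$ and $\langle\nabla u_p,\nabla u_q\rangle$ vanish almost everywhere on $M^n$. Choosing $\alpha,\beta\in\mathbb{R}$ of opposite sign so that $\alpha\int_{M^n}u_p+\beta\int_{M^n}u_q=0$ makes $u:=\alpha u_p+\beta u_q$ admissible for the Rayleigh characterization of $\Lambda_1(M^n)$, and a direct computation gives
\[
\Lambda_1(M^n)\leq\frac{\int_{M^n}|\nabla u|^2}{\int_{M^n}u^2}=\frac{\alpha^2\lambda_1(B_p)\int u_p^2+\beta^2\lambda_1(B_q)\int u_q^2}{\alpha^2\int u_p^2+\beta^2\int u_q^2}\leq\max\!\bigl(\lambda_1(B_p),\lambda_1(B_q)\bigr),
\]
which combined with the previous paragraph delivers the desired bound. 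The step I expect to be the main obstacle is the comparison for $B_q$, since \textbf{Assumption 2} is literally formulated only with respect to $p$: one must be careful with the definition of ``radial Ricci curvature lower bound'' in the sense of FMI and use the symmetry $k(t)=k(l-t)$ together with the diametral pair $(p,q)$ to promote the bound to one valid at $q$, after which Theorem \ref{theo-CEC} applies verbatim. Everything else is routine bookkeeping.
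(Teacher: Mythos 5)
Your proposal follows essentially the same two-ball Rayleigh-quotient strategy the paper uses: take the disjoint antipodal balls $B(p,l/2)$ and $B(q,l/2)$, pick test functions supported on each, form a linear combination orthogonal to constants, and invoke the Cheng-type comparison of Theorem~\ref{theo-CEC}. The only packaging difference is that you work with the actual first Dirichlet eigenfunctions $u_p,u_q$ of the two balls and cite Theorem~\ref{theo-CEC} as a black box to bound $\lambda_1(B_p)$ and $\lambda_1(B_q)$, whereas the paper transplants the \emph{model} eigenfunction $\phi$ via $d(p,\cdot)$ and $d(q,\cdot)$ and bounds the resulting Rayleigh quotients by appealing to the inner workings of the proof in \cite[pp.~712--713]{FMI}; the two routes are interchangeable and give the same intermediate estimate. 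The concern you flag about the ball at $q$ is genuine, and it is left just as implicit in the paper: Assumption~2 literally controls the Ricci tensor only along geodesics issuing from $p$, and the relation $k(t)=k(l-t)$ is a property of the comparison function, not of the manifold's geometry near $q$, so by itself it does not produce the Laplacian comparison for $d(q,\cdot)$ that either version of the argument needs on $B(q,l/2)$. You were right to single this out as the delicate step; the paper's proof simply asserts that the Assumption~2 bound suffices for both balls without further comment.
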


\begin{proof}
By the diameter \textbf{\emph{Assumption 1}}, one knows that
geodesic balls $B(p,\frac{l}{2})$ and $B(q,\frac{l}{2})$ are
disjoint. Set $\phi_{1}:=\phi(d(p,\cdot))$,
$\phi_{2}:=\phi(d(q,\cdot))$, where $\phi$ is radial and the
eigenfunction of the first Dirichlet eigenvalue
$\lambda_{1}\left(\mathscr{B}_{M^{\ast}}\left(p^{\ast},\frac{l}{2}\right)\right)$.
Denote by $\psi_{0}$ the first closed eigenfunctions of $M^{n}$,
i.e. $\Delta \psi_{0}+\Lambda_{0}\psi_{0}=0$, which is a nonzero
constant function. It is easy to check that $\phi_{1}\in
W^{1,2}_{0}(B(p,\frac{l}{2}))$ and $\phi_{2}\in
W^{1,2}_{0}(B(q,\frac{l}{2}))$. Then the proof of Theorem
\ref{theo-CEC} shown in \cite[pp. 712-713]{FMI} implies under the
curvature \textbf{\emph{Assumption 2}} that
 \begin{eqnarray*}
 \int_{B(p,\frac{l}{2})}(\phi'_{1})^{2}\leq\lambda_{1}\left(\mathscr{B}_{M^{\ast}}\left(p^{\ast},\frac{l}{2}\right)\right)\int_{B(p,\frac{l}{2})}(\phi_{1})^{2}
 \end{eqnarray*}
 and
\begin{eqnarray*}
 \int_{B(q,\frac{l}{2})}(\phi'_{2})^{2}\leq\lambda_{1}\left(\mathscr{B}_{M^{\ast}}\left(p^{\ast},\frac{l}{2}\right)\right)\int_{B(q,\frac{l}{2})}(\phi_{2})^{2}.
 \end{eqnarray*}
Extend smoothly $\phi_{1}$ to be zero out of $B(p,\frac{l}{2})$,
$\phi_{2}$ to be zero out of $B(q,\frac{l}{2})$, and then there must
exist constants $a_{1}$, $a_{2}$ (not all zero) such that
 \begin{eqnarray*}
 \int_{M^{n}}\psi_{0}(a_{1}\phi_{1}+a_{2}\phi_{2})=\int_{M^{n}}(a_{1}\phi_{1}+a_{2}\phi_{2})=0.
 \end{eqnarray*}
Together with the fact that $B(p,\frac{l}{2})$ and
$B(q,\frac{l}{2})$ are disjoint, one has
$a_{1}\phi_{1}+a_{2}\phi_{2}$ cannot vanish identically. Therefore,
using the variational principle, one can get
 \begin{eqnarray*}
 \Lambda_{1}(M^n)\int_{M^{n}}(a_{1}\phi_{1}+a_{2}\phi_{2})^{2}&\leq&\int_{M^{n}}(a_{1}\phi'_{1}+a_{2}\phi'_{2})^{2}\\
 &=& \int_{M^{n}}\left[(a_{1}\phi_{1}+a_{2}\phi_{2})'\right]^{2}\\
 &\leq&
 \lambda_{1}\left(\mathscr{B}_{M^{\ast}}\left(p^{\ast},\frac{l}{2}\right)\right)\int_{M^{n}}(a_{1}\phi_{1}+a_{2}\phi_{2})^{2},
 \end{eqnarray*}
which implies
$\Lambda_{1}(M^{n})\leq\lambda_{1}\left(\mathscr{B}_{M^{\ast}}\left(p^{\ast},\frac{l}{2}\right)\right)$.
This completes the proof of Theorem \ref{theo-2}.
\end{proof}

Now we can give the proof of our main conclusion:

\begin{proof} [Proof of Theorem \ref{theo-1}]
By the diameter  \textbf{\emph{Assumption 1}}, one knows that
geodesic balls $B(p,\frac{l}{2})$ and $B(q,\frac{l}{2})$ are
disjoint. By the curvature \textbf{\emph{Assumption 2}}, the
eigenvalue lower bound \textbf{\emph{Assumption 3}} and Theorem
\ref{theo-2}, we have
 \begin{eqnarray*}
 \Lambda^{+}\leq\Lambda_{1}(M^{n})\leq\lambda_{1}\left(B\left(p,\frac{l}{2}\right)\right)\leq
 \lambda_{1}\left(\mathscr{B}_{M^{\ast}}\left(p^{\ast},\frac{l}{2}\right)\right)=
\Lambda^{+}
 \end{eqnarray*}
and
\begin{eqnarray*}
 \Lambda^{+}\leq\Lambda_{1}(M^{n})\leq\lambda_{1}\left(B\left(q,\frac{l}{2}\right)\right)\leq
 \lambda_{1}\left(\mathscr{B}_{M^{\ast}}\left(p^{\ast},\frac{l}{2}\right)\right)=
 \Lambda^{+},
 \end{eqnarray*}
which implies
$\lambda_{1}\left(B\left(p,\frac{l}{2}\right)\right)=\Lambda^{+}$
and
$\lambda_{1}\left(B\left(q,\frac{l}{2}\right)\right)=\Lambda^{+}$.
Then, by Theorem \ref{theo-CEC} (i.e. Cheng-type eigenvalue
comparison), one easily knows that geodesic balls
$B(p,\frac{l}{2})$, $B(q,\frac{l}{2})$ are isometric to
$\mathscr{B}_{M^{\ast}}\left(p^{\ast},\frac{l}{2}\right)$. Now, if
$B(q,\frac{l}{2})$ is properly contained in
$M^{n}\setminus\overline{B(p,\frac{l}{2})}$, then using the domain
monotonicity of Dirichlet eigenvalues of the Laplacian (see e.g.
\cite{IC}), one can get
\begin{eqnarray*}
\lambda_{1}\left(M^{n}\setminus
B\left(p,\frac{l}{2}\right)\right)<\lambda_{1}\left(B\left(q,\frac{l}{2}\right)\right)=\Lambda^{+}.
\end{eqnarray*}
Therefore, one has $\Lambda_{1}(M^{n})\leq\Lambda^{+}$, which is
contradict with the \textbf{\emph{Assumption 3}} that
$\Lambda_{1}(M^{n})\geq\Lambda^{+}$, and consequently the only
possibility is that
 \begin{eqnarray*}
 M^{n}=\overline{B\left(p,\frac{l}{2}\right)}\cup
 B\left(q,\frac{l}{2}\right).
 \end{eqnarray*}
Together with the fact that in this situation $B(p,\frac{l}{2})$,
$B(q,\frac{l}{2})$ are isometric to
$\mathscr{B}_{M^{\ast}}\left(p^{\ast},\frac{l}{2}\right)$, and also
using the symmetric property of the continuous function $k(t)$, the
rigidity conclusion of Theorem \ref{theo-1} follows. Besides, it is
not hard to see that $M^{\ast}$ (together with the one-point
compactification topology at the point
$\{l\}\times_{0}\mathbb{S}^{n-1}$) is symmetric with respect to the
submanifold $\{\frac{l}{2}\}\times_{f(l/2)}\mathbb{B}^{n}$, where
$\mathbb{B}^{n}$ denotes the Euclidean $n$-ball whose boundary is
$\mathbb{S}^{n-1}$. The proof is finished.
\end{proof}

Once the main conclusion Theorem \ref{theo-1} was proven, two sphere
theorems separately given in Corollaries \ref{coro-1} and
\ref{coro-2} follow directly.

\section*{Acknowledgments}
\renewcommand{\thesection}{\arabic{section}}
\renewcommand{\theequation}{\thesection.\arabic{equation}}
\setcounter{equation}{0} \setcounter{maintheorem}{0}

This research was supported in part by the NSF of China (Grant Nos.
11801496 and 11926352), the Fok Ying-Tung Education Foundation
(China), Hubei Key Laboratory of Applied Mathematics (Hubei
University), and Key Laboratory of Intelligent Sensing System and
Security (Hubei University), Ministry of Education.

\end{document}